\newcommand{\g}{\geqslant}
\newcommand{\ar}{\rangle}
\newcommand{\al}{\langle}
\newcommand{\RR}{\mathbb{R}}
\newcommand{\pr}{\mathcal{M}}
\newcommand{\p}{\partial}
\newcommand{\les}{\leqslant}
\newcommand{\lesa}{\lesssim}
\theoremstyle{plain}
\newtheorem{theorem}{Theorem}
\newtheorem{lemma}[theorem]{Lemma}
\theoremstyle{remark}
\newtheorem{remark}{Remark}
\title[Local well-posedness in Lorenz gauge]{Local well-posedness for the Space-Time Monopole equation in Lorenz Gauge}
\author[N. Bournaveas]{Nikolaos Bournaveas}
\address{
School of Mathematics\\
University of Edinburgh\\
Edinburgh EH9 3JZ\\
United Kingdom}
\email{N.Bournaveas@ed.ac.uk, T.L.Candy@sms.ed.ac.uk}
\author[T. Candy]{Timothy Candy}
\thanks{Article has been published in NoDEA, the final publication is available at http://www.springerlink.com/content/822235v0l315v544/}
\date{\today}
\begin{document}
\maketitle

\begin{abstract}
It is known from the work of Czubak \cite{Czubak2010} that the space-time Monopole equation is locally well-posed
in the Coulomb gauge for small initial data in $H^s(\mathbb{R}^2)$ for $s>\frac{1}{4}$. Here we prove
local well-posedness for arbitrary initial data in $H^s(\mathbb{R}^2)$ with $s>\frac{1}{4}$
 in the Lorenz gauge.
\end{abstract}

\section{Introduction}

The space-time Monopole equation is
        \begin{equation}\label{mono unmod}
                F_A = \ast D_A \phi
        \end{equation}
where $F_A$ is the curvature of a one-form connection $A = A_\alpha dx^\alpha$, $D_A$ is a covariant derivative of the Higgs field $\phi$, and $\ast$ is the Hodge star operator with respect to the Minkowski metric diag(-1, 1, 1) on $\RR^{1+2}$. The components of the connection $A=A_\alpha dx^\alpha$, and the Higgs field $\phi$, are maps from $\RR^{1+2}$ into $\mathfrak{g}$
        $$ A_\alpha : \RR^{1+2} \rightarrow \mathfrak{g}, \qquad \qquad \phi : \RR^{1+2} \rightarrow \mathfrak{g},$$
where $\mathfrak{g}$ is a Lie algebra with Lie bracket $[\cdot, \cdot]$. For simplicity we will always assume $\mathfrak{g}$ is the Lie algebra of a matrix Lie group such as $SO(n)$ or $SU(n)$. The curvature $F_A$ of the connection $A$, and the covariant derivative $D_A \phi$  are given by
            $$ F_A = \frac{1}{2}\big( \p_\alpha A_\beta - \p_\beta A_\alpha  + [A_\alpha, A_\beta]\Big)dx^\alpha \wedge dx^\beta, \qquad D_A \phi = (\p_\alpha \phi + [A_\alpha, \phi]\big) dx^\alpha.$$

  The space-time Monopole equation is an example of a non-abelian gauge field theory and can be derived by dimensional reduction from the anti-selfdual Yang-Mills equations, see for instance \cite{Dai2006} or \cite{Mason1996}. It was first introduced by Ward in \cite{Ward1989} as a hyperbolic analog of the Bogomolny equations, or magnetic monopole equations, which describe a point source of magnetic charge. The space-time Monopole equation is an example of a completely integrable system and has an equivalent formulation as a Lax pair. The Lax pair formulation of (\ref{mono unmod}), together with the inverse scattering transform, was used by Dai-Terng-Uhlenbeck in \cite{Dai2006} to prove global existence and uniqueness up to a gauge transform from small initial data in $W^{2, 1}(\RR^2)$. The survey \cite{Dai2006} also contained a number of other interesting results related to the space-time Monopole equation.

  In the current article we study the local well-posedness of the initial value problem for the space-time Monopole equation from rough initial data in $H^s(\RR^2)$. We can think of the equation (\ref{mono unmod}) as a system which is roughly of the form\footnote{The exact formulation depends on the choice of gauge, see below.}
        \begin{equation}\label{generic form} \Box u = |\nabla|^{-1} B( \p u , \p u ) \end{equation}
  where $B$ is some bilinear form. It is well known since the seminal paper of Klainerman-Machedon \cite{Klainerman1994}, that to prove optimal well-posedness for nonlinear wave equations of the form (\ref{generic form}),  the bilinear form $B$ must satisfy certain cancelation properties known as null structure. Consequently, the local behavior of the space-time Monopole equation depends crucially on the presence of null structure.

  The space-time Monopole equation (\ref{mono unmod}) is gauge invariant. More precisely if $(A, \phi)$ is a solution to (\ref{mono unmod}) then so is $(gAg^{-1} + gd g^{-1}, g \phi g^{-1})$ where the gauge transform $g : \RR^{1+2} \rightarrow G$ is smooth and compactly supported map into the Lie group $G$. Thus to obtain a wellposed problem we need to specify a choice of gauge. Traditionally, for nonlinear hyperbolic systems with a gauge freedom such as Maxwell-Klein-Gordon or Maxwell-Dirac, the gauge was chosen to satisfy the Coulomb condition
                    $ \p^j A_j  = 0,$
but more recently null structure has been discovered in the Lorenz gauge as well, \cite{Selberg2010}, \cite{D'Ancona2010a}.
  In the Coulomb gauge, the system (\ref{mono unmod}) can be written as a nonlinear system of wave equations for $(A_1, A_2, \phi)$ coupled with a nonlinear elliptic equation for $A_0$. The advantage of this gauge is that usually the estimates for the elliptic component $A_0$ are quite favorable. Recently Czubak, in\footnote{Though the result was obtain earlier in Czubak's PhD thesis \cite{Czubak2008}}   \cite{Czubak2010}, showed that the space-time Monopole equations in the Coulomb gauge are locally wellposed for small initial data in $H^s$ with $s>\frac{1}{4}$. The small data assumption is an artifact of the choice of the Coulomb gauge, as the elliptic estimates for $A_0$ do not involve time and so to close an iteration argument a smallness assumption is needed.

  In the current article we instead consider the Lorenz gauge condition
            $$ \p_\alpha A^\alpha = 0.$$
  With this choice of gauge the space-time Monopole equations can be written as a purely hyperbolic system and the small data assumption is not needed.  Additionally our proof is substantially shorter as we do not have to combine elliptic estimates with hyperbolic estimates, which can often be technically very inconvenient. Our main result is the following.

%
%
%
%
%

\begin{theorem}\label{main thm}
Assume $s>\frac{1}{4}$ and $\phi_0, a \in H^s(\RR^2)$. Then there exists $T=T(\|\phi_0\|_{H^s(\RR^2)}, \|a\|_{H^s(\RR^2)})>0$ such that the space-time Monopole equation (\ref{mono unmod}) coupled with the Lorenz gauge condition
        $$ \partial^\alpha A_\alpha = 0$$
has a solution $(\phi, A) \in C( [-T, T], H^s(\RR^2))$ with $(\phi(0), A(0)) = (\phi_0, a)$. Moreover the solution is unique in some subspace of $C( [-T, T], H^s(\RR^2))$, the solution map depends continuously on the initial data, and any additional regularity persists in time\footnote{More precisely if $\phi_0,a \in H^r(\RR^2)$ for some $r \g s$, then we also have $(\phi, A) \in C([-T, T], H^r(\RR^2))$ with $T$ only depending on $\|\phi_0\|_{H^s(\RR^2)}$ and $\|a\|_{H^s(\RR^2)}$. }.
\end{theorem}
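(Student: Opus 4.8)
The plan is to recast the Monopole equation in Lorenz gauge as a system of nonlinear wave equations of the schematic form $\Box u = |\nabla|^{-1} B(\partial u, \partial u)$ and then run a contraction mapping argument in an appropriate Bourgain-type $X^{s,b}$ space adapted to the wave operator. Let me think about how to structure this.

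First, I would derive the precise equations. The Monopole equation $F_A = \ast D_A \phi$ together with the Lorenz gauge $\partial^\alpha A_\alpha = 0$ should yield wave equations for each component of $(A_0, A_1, A_2, \phi)$. Taking a derivative of the curvature equation and using the gauge condition to cancel the non-hyperbolic terms, one gets $\Box A_\alpha = $ (quadratic terms) and $\Box \phi = $ (quadratic terms). The key structural feature I'd expect, following Selberg–Tesfahun and d'Ancona–Foschi–Selberg in the Lorenz gauge, is that the resulting quadratic nonlinearities carry null structure — they are combinations of the null forms $Q_{\alpha\beta}(u,v) = \partial_\alpha u \partial_\beta v - \partial_\beta u \partial_\alpha v$ and $Q_0(u,v) = \partial^\alpha u \partial_\alpha v$. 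This null structure is exactly what is needed to push the well-posedness threshold down to $s > 1/4$.

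The main technical work is to prove the bilinear estimates in $X^{s,b}$ spaces. Setting up the iteration, I would seek a fixed point of the Duhamel map in a space $\mathcal{X}^{s,b}$ with $b$ slightly greater than $1/2$, so that $\mathcal{X}^{s,b} \hookrightarrow C([-T,T], H^s)$. The contraction estimate reduces to bounds of the form
$$ \big\| |\nabla|^{-1} Q(u, v) \big\|_{X^{s, b-1}} \lesssim \|u\|_{X^{s,b}} \|v\|_{X^{s,b}}, $$
where $Q$ denotes the relevant null form. These are proved by decomposing into dyadic frequency pieces, using the symbol bounds for $Q_0$ and $Q_{\alpha\beta}$ (which gain a factor controlling the angle between the interacting frequencies, equivalently a power of the distance to the light cone), and then applying the fundamental $L^2$ bilinear estimates for the wave equation in two space dimensions. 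The gain of one derivative from $|\nabla|^{-1}$, combined with the null-form gain, is what makes $s > 1/4$ attainable; this is the heart of the matter.

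The hard part, and the step I expect to be most delicate, is verifying that every quadratic term appearing in the reduced system genuinely exhibits null structure after the Lorenz-gauge substitution — in particular the terms arising from the commutators $[A_\alpha, \cdot]$ and from the Hodge dual, where a naive computation produces terms without manifest null form. One must use the gauge condition $\partial^\alpha A_\alpha = 0$ and, where necessary, elliptic-type gains from $|\nabla|^{-1}$ to rewrite the apparently bad terms. Once the algebraic reduction is complete and the bilinear estimates are in hand, the remaining steps are standard: the contraction mapping theorem on a small ball in $\mathcal{X}^{s,b}$ restricted to $[-T,T]$ gives existence and uniqueness for $T$ depending only on the $H^s$ norms of the data; continuous dependence follows from the same bilinear estimates applied to differences of solutions; and persistence of higher regularity follows by running the identical estimates at level $H^r$ while measuring the nonlinearity in the $H^s$-controlled norm, so that the lifespan $T$ is unchanged.
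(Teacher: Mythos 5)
Your outer scaffolding (iteration in wave-adapted $X^{s,b}$ spaces, contraction on a time interval depending only on the $H^s$ norms, continuous dependence and persistence of regularity from the same bilinear estimates) agrees with the paper's, but the core of your proposal has a genuine gap: the null structure is never exhibited, only conjectured. You yourself flag this as "the most delicate step" and then defer it, but it is precisely the heart of the theorem, and in your second-order formulation it is not a routine computation -- and may not come out in the form you claim. The monopole system in Lorenz gauge is a \emph{first-order} system whose quadratic nonlinearity carries \emph{no} derivatives; differentiating to obtain $\Box A_\alpha$, $\Box \phi$ produces nonlinearities of the schematic form $u\,\partial v$ (one derivative total), which is not a classical null form $Q_{\alpha\beta}$ or $Q_0$ in either factor. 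The standard device for forcing such terms into null-form shape is a Hodge decomposition of the spatial part of $A$; but in the Lorenz gauge the curl-free potential is $\Delta^{-1}\partial_t A_0$ (since $\partial_j A_j = \partial_t A_0$), so the "apparently bad" terms you mention come attached to the operator $\Delta^{-1}\partial_t$, whose low-frequency/high-modulation behaviour is not controlled by the $X^{s,b}$ norms you propose to iterate in. Nothing in your plan addresses this. There are also smaller omissions forced by the second-order reformulation: you must manufacture data for $\partial_t A(0)$, $\partial_t\phi(0)$ from the first-order equations and verify that solutions of the wave system actually solve the original first-order system.

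The paper avoids all of this by never passing to second-order equations. It rewrites the system in the variables $u = (A_0+A_1,\,\phi+A_2)$, $v = (A_0-A_1,\,\phi-A_2)$ as a Dirac-type system $\partial_t u - \alpha\cdot\nabla u = N(u,v)$, $\partial_t v + \alpha\cdot\nabla v = N(v,u)$ with underivative quadratic nonlinearity, diagonalizes it with the half-wave projections $\mathcal{M}_\pm$, and locates the null structure in the matrix symbol bound $|m_+(\eta)m_\pm(\xi)| \lesssim \theta(\xi,-\eta)$ of Lemma \ref{null struc} -- a spinorial (Dirac-type) null structure, not a combination of the classical null forms. The required bilinear estimate is then an angle-weighted product estimate with no derivatives on the inputs, which follows from the Foschi--Klainerman homogeneous bilinear estimates together with the transference principle. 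If you wanted to salvage your route you would have to carry out the Lorenz-gauge algebra and control the $\Delta^{-1}\partial_t$ terms (in the spirit of Selberg--Tesfahun for Maxwell--Klein--Gordon, where the null structure of the \emph{system}, not of individual terms, must be used); the first-order diagonalization is exactly what makes the Lorenz-gauge proof short, and is the idea your proposal is missing.
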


\begin{remark}
    The space-time Monopole equation is invariant under the scaling $ \lambda A(\lambda t, \lambda x)$, $\lambda \phi(\lambda t, \lambda x)$. Thus (\ref{mono unmod}) is $L^2$ critical and so ideally we would like to prove local well-posedness for $s>0$. However the space-time Monopole equation is essentially a system of nonlinear wave equations, and the fact that we are working in $\RR^{1+2}$ means that there is a gap between what scaling predicts, and the regularity possible via standard null form estimates. More precisely, consider the equation
                $$ \Box u = Q $$
    where $Q$ is a combination of the null forms
            $$ Q_{\alpha\beta}(u, v) = \p_\alpha u \p_\beta v - \p_\beta u \p_\alpha v.$$
    Then the scale invariant space is $H^1 \times L^2$, but standard null form estimates only give well-posedness for $(u(0), \p_tu(0) ) \in H^{s} \times H^{s-1}$ for $s>\frac{5}{4}$. Below $\frac{5}{4}$, it can be shown that the first iterate leaves the data space $H^s$,  see \cite{Zhou1997}. Thus in some sense the regularity $H^{\frac{1}{4}}$ in Theorem \ref{main thm} and the work of Czubak \cite{Czubak2010}, is the limit for iterative methods. On the other hand the space-time Monopole has additional structure which is not used in the proof of Theorem \ref{main thm}. Hence it may be possible to remove the restriction $s>\frac{1}{4}$ by exploiting the structure in a different way.
\end{remark}


\subsection*{Notation}

Throughout this paper $C$ denotes a positive constant which can vary from line to line. The notation $a \lesa b $ denotes the inequality $a \les C b$.
 We let $L^p(\RR^n)$ denote the usual Lebesgue space.  Occasionally we write $L^p(\RR^n) = L^p$ when we can do so without causing confusion. This comment also applies to the other function spaces which appear throughout this paper. If $X$ is a metric space and $I\subset \RR $ is an interval,  then  $C(I, X)$ denotes the set of continuous functions from $I$ into $X$. For $s \in \RR$, we define $H^s$ to be the usual Sobolev space defined using the norm
            $$ \| f \|_{H^s (\RR^2)} = \| \Lambda^s f \|_{L^2(\RR^2)}$$
 where $\widehat{\big(\Lambda^s f \big)} (\xi) = (1+ |\xi|^2)^{\frac{s}{2}} \widehat{f}(\xi)$ and $\widehat{f}$ denotes the Fourier transform of $f$. The space-time Fourier transform of a function $\psi(t, x)$ is denoted by $\widetilde{\psi}(\tau, \xi)$.

\section{Preliminaries}

Recall that the Hodge star operator, $\ast$, is defined for  $\omega \in \bigwedge^p(M)$ by
        $$ (\ast \omega)_{\lambda_{p+1}...\lambda_{n}} = \frac{1}{p!}\eta_{\lambda_1...\lambda_{n}} \omega^{\lambda_{1}... \lambda_p}$$
where $(M, g)$ is a pseudo Riemannian manifold, $\eta$ is the volume form with respect to the metric $g$,  and the previous formula is given in some local coordinate system. If we couple the space-time Monopole equation (\ref{mono unmod}) with the Lorenz gauge condition
        $$ \p^\mu A_\mu = 0$$
and write out the resulting system in terms of $\phi$ and the components $A_\alpha$ we obtain
        \begin{align*}
            \p_t \phi  + \p_1 A_2 - \p_2 A_1 &= [A_2, A_1] + [\phi, A_0] \\
            \p_t A_0  - \p_1 A_1 - \p_2 A_2 &= 0 \\
            \p_t A_1  - \p_1 A_0 - \p_2 \phi &= [ A_2, \phi  ] + [A_1, A_0 ] \\
            \p_t A_2 + \p_1 \phi - \p_2 A_0 &= [\phi, A_1] + [A_2 , A_0]. \end{align*}
Define $u, v : \RR^{1+2} \rightarrow \mathfrak{g} \times \mathfrak{g}$ by
    $$  u = \begin{pmatrix} A_0 + A_1 \\ \phi + A_2 \end{pmatrix}\qquad v= \begin{pmatrix} A_0 - A_1 \\ \phi - A_2 \end{pmatrix} .$$
Then since
    $$
        [A_2, A_1] + [\phi, A_0] \pm \big( [\phi, A_1] + [A_2 , A_0]\big) = [ \phi \pm A_2, A_0 \pm A_1] $$
and
    $$ [ A_2, \phi] + [A_1, A_0] = \frac{1}{2} \big( [ A_2 - \phi, A_2 + \phi] + [ A_1 - A_0, A_1 + A_0]\big) $$
we can write the Monopole equation as
\begin{align*}
    \p_t u_1 - \p_1 u_1 - \p_2 u_2 &= \frac{1}{2}\big( u\cdot v - v \cdot u   \big)  \\
    \p_t u_2 + \p_1 u_2 - \p_2 u_1 &= [u_2, u_1] \\
    \p_t v_1 + \p_1 v_1 + \p_2 v_2 &= \frac{1}{2}\big(  v\cdot u - u \cdot v  \big)  \\
    \p_t v_2 - \p_1 v_2 + \p_2 v_1 &= [v_2, v_1]. \end{align*}
Define the matrices
    $$ \alpha_1 = \begin{pmatrix} 1 & 0 \\ 0 & -1 \end{pmatrix}, \qquad \alpha_2 = \begin{pmatrix} 0 & 1 \\ 1 & 0 \end{pmatrix}, \qquad \beta = \begin{pmatrix} 0 & 1 \\ -1 & 0 \end{pmatrix} $$
and let $\alpha = ( \alpha_1, \alpha_2)$. Then we can rewrite the previous equations in the more concise form
       \begin{equation}\label{mod mono} \begin{cases}
        &\p_t u  - \alpha \cdot \nabla u = N(u, v)\\
        &\p_t v  + \alpha \cdot \nabla v = N(v, u)
       \end{cases}
       \end{equation}
where
        $$N( a, b) = \begin{pmatrix}  \frac{1}{2} \big( a \cdot b - b \cdot a \big)   \\ \beta a \cdot a \end{pmatrix} . $$
We can now restate Theorem \ref{main thm} as follows.
\begin{theorem}\label{local}
Assume $s>\frac{1}{4}$ and $f, g \in H^s$. Then there exists $T=T(\|f\|_{H^s}, \|g\|_{H^s})>0$ such that (\ref{mod mono}) has a solution $(u, v) \in C( [-T, T], H^s)$ with $(u(0), v(0)) = (f, g)$. Moreover, the solution is unique in some subspace of $C( [-T, T], H^s)$, the solution map depends continuously on the initial data, and any additional regularity persists in time\footnote{More precisely if $f, g \in H^r$ for some $r \g s$, then we also have $(u, v) \in C([-T, T], H^r)$ with $T$ only depending on $\|f\|_{H^s}$ and $\|g\|_{H^s}$. }.

\end{theorem}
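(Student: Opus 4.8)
The plan is to recast \eqref{mod mono} as a fixed-point problem and solve it by contraction in Bourgain-type spaces adapted to the characteristic surfaces of the operators $\p_t \mp \alpha\cdot\nabla$. The first step is to diagonalize these Dirac operators. Since $\alpha_1,\alpha_2$ are symmetric, square to the identity, and anticommute, we have $(\alpha\cdot\xi)^2 = |\xi|^2 I$, so $\alpha\cdot\xi$ has eigenvalues $\pm|\xi|$ with orthogonal eigenprojections
$$ \Pi_\pm(\xi) = \tfrac12\Big( I \pm \tfrac{\alpha\cdot\xi}{|\xi|}\Big). $$
Decomposing $u$ and $v$ into their $\Pi_\pm$ pieces, each piece solves a half-wave equation whose characteristic set is a component of the cone $\tau=\pm|\xi|$. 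Accordingly I introduce spaces $X^{s,b}_\pm$ with norm
$$ \|w\|_{X^{s,b}_\pm} = \big\| \langle\xi\rangle^s \langle \tau \mp |\xi|\rangle^b\, \widetilde{w}(\tau,\xi)\big\|_{L^2_{\tau,\xi}}, $$
and measure $(u,v)$ in a resolution space $\mathcal{X}^{s,b}$ obtained by placing each projected component in the $X^{s,b}_\pm$ carrying its matching modulation weight (for $v$ the signs are reversed relative to $u$). The purpose of this decomposition is that the quadratic nonlinearity $N$ acquires \emph{null structure}: once the products in $N(u,v)$ and $N(v,u)$ are expanded in terms of the projected pieces, the commutator structure of $N$ together with the spinorial structure encoded by $\alpha,\beta$ and $\Pi_\pm$ forces each bilinear interaction to carry an angular factor that vanishes as the two input frequencies become parallel. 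This cancellation is what makes $s>\tfrac14$ attainable, consistent with the heuristic of the Remark after shifting one derivative between the first- and second-order formulations.

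Next I would record the linear machinery. By Duhamel's formula a solution of \eqref{mod mono} satisfies
$$ u(t) = e^{t\alpha\cdot\nabla}f + \int_0^t e^{(t-t')\alpha\cdot\nabla} N(u,v)(t')\,dt', $$
and symmetrically $v(t) = e^{-t\alpha\cdot\nabla}g + \int_0^t e^{-(t-t')\alpha\cdot\nabla} N(v,u)(t')\,dt'$. With a smooth time cutoff $\chi(t/T)$ one has the standard transfer estimates: the free evolution lies in $X^{s,b}_\pm$, the embedding $X^{s,b}_\pm\hookrightarrow C(\RR,H^s)$ holds for $b>\tfrac12$, and the inhomogeneous term obeys
$$ \Big\| \chi(t/T)\int_0^t e^{\pm(t-t')\alpha\cdot\nabla}F\,dt'\Big\|_{X^{s,b}_\pm} \lesssim T^{1-b-a}\,\|F\|_{X^{s,-a}_\pm} $$
for exponents in the standard range $0<a<\tfrac12<b$ with $a+b<1$. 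The gain $T^{1-b-a}$ is what will turn the nonlinear map into a contraction on a short time interval and produce the dependence $T=T(\|f\|_{H^s},\|g\|_{H^s})$.

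The heart of the proof is the bilinear estimate
$$ \|N(u,v)\|_{X^{s,-a}} \lesssim \|u\|_{\mathcal{X}^{s,b}}\,\|v\|_{\mathcal{X}^{s,b}}, $$
together with its analogue for $N(v,u)$, and I expect this to be the main obstacle. To prove it I would split both inputs into $\Pi_\pm$ pieces and into dyadic frequency and modulation blocks, then exploit the angular null factor described above together with the two gains available for wave interactions in $\RR^{1+2}$: the improvement coming from transversality and high modulation, and the bilinear $L^2$ (Fourier restriction) estimate for products of concentrated half-waves. The decisive case is the high-high-to-low interaction of like sign with small output modulation, where the angular null factor must be balanced against the concentration of both inputs near the cone in order to absorb the derivative deficit and close at any $s>\tfrac14$; the strict inequality is needed to leave room for the dyadic summation in the frequency and angular parameters.

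With the bilinear estimate in hand, the map sending $(u,v)$ to its Duhamel iterate is a contraction on a small ball of $\mathcal{X}^{s,b}$ for $T$ depending only on $\|f\|_{H^s}$ and $\|g\|_{H^s}$, which yields existence together with uniqueness in this subspace, and the embedding $X^{s,b}_\pm\hookrightarrow C([-T,T],H^s)$ gives the asserted continuity in time. Continuous dependence on the data follows from applying the same contraction bounds to differences of solutions, and persistence of higher regularity follows because the bilinear estimate holds with $s$ replaced by any $r\ge s$ while the lifespan is controlled by the $H^s$ norms alone, a standard consequence of the same Littlewood--Paley bookkeeping. Undoing the change of variables $u,v\mapsto(\phi,A)$ then recovers Theorem \ref{main thm}.
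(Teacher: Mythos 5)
Your setup coincides with the paper's: you diagonalize with the eigenprojections $\Pi_\pm=\mathcal{M}_\pm$ of $\alpha\cdot\xi$, place the projected components of $u$ and $v$ in $X^{s,b}_\pm$ with the reversed sign pairing for $v$, reduce by the standard Duhamel/cut-off machinery to a bilinear estimate, and observe that the projected products carry an angular factor vanishing in the resonant (parallel, resp.\ antiparallel) configurations. One correction to your framing: the null structure used here is purely spinorial. The paper estimates each term of the commutators separately by the triangle inequality and uses no cancellation between them; the angle factor comes from the symbol bound $|m_+(\eta)m_\pm(\xi)|\lesa\theta(\xi,-\eta)$ of Lemma \ref{null struc}, together with the identity $\beta\mathcal{M}_\pm=\mathcal{M}_\mp\beta$, which puts the $\beta a\cdot a$ terms into the same form. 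Attributing part of the cancellation to the ``commutator structure of $N$'' is therefore misleading, though not fatal.

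The genuine gap is at the step you yourself call ``the main obstacle'': the bilinear estimate. What you offer for it is a strategy, not a proof --- dyadic frequency/modulation blocks, angular decomposition, transversality and high-modulation gains, bilinear restriction for half-waves, and an unspecified ``balancing'' in the decisive resonant case. No exponent arithmetic is carried out, so nothing in your argument actually produces the threshold $\frac14$: as written, the same words would ``prove'' the estimate for every $s>0$, which is false for iteration methods (cf.\ the Remark following Theorem \ref{main thm} and the counterexamples of Zhou cited there). Since every other ingredient of your proposal, and of the paper, is routine, this estimate is where the entire content of Theorem \ref{local} resides, and it cannot be left as a sketch. The paper closes it as follows: bound the symbol by the angle (Lemma \ref{null struc}), convert the angle into hyperbolic weights via
$$\theta^2(\xi-\eta,\eta)\approx\frac{|\xi-\eta|+|\eta|}{|\xi-\eta|\,|\eta|}\,r_+,\qquad\qquad \theta^2(\xi-\eta,-\eta)\approx\frac{|\xi|}{|\xi-\eta|\,|\eta|}\,r_-,$$
distribute $r_\pm$ over the three modulation weights using $r_\pm\lesa\al|\tau|-|\xi|\ar\al\tau-\lambda-|\xi-\eta|\ar\al\lambda\mp|\eta|\ar$, and then invoke the Foschi--Klainerman bilinear estimates for products of free half-waves via the transference principle (Theorem \ref{homogeneous null}). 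In that route the restriction $s>\frac14$ emerges transparently: Theorem \ref{homogeneous null} requires $\alpha\g\frac14$, and the scaling relation $s+\alpha=s_1+s_2-\frac12$ with the admissible choices of $s_1,s_2$ forces $\alpha=s$, whence $s>\frac14$. To complete your proposal you must either carry out your dyadic program in full (possible, but it amounts to reproving wave-Sobolev product estimates of d'Ancona--Foschi--Selberg type and is the bulk of the work) or follow this reduction to known free-wave estimates.
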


Note that Theorem \ref{main thm} follows immediately from Theorem \ref{local}. To prove Theorem \ref{local} we will first diagonalise the left hand side of (\ref{mod mono}). Define the projections $\mathcal{M}_{\pm}$ to be the operator with Fourier multiplier  $ m_\pm (\xi) = \frac{1}{2} \left( I \pm \frac{1}{|\xi|} \alpha \cdot \xi \right)$
so
        $$ \widehat{\mathcal{M}_{\pm} f }(\xi) = m_\pm (\xi) \widehat{f}(\xi).$$
It is easy to see that
        $$ f = \mathcal{M}_+ f  + \mathcal{M}_-f, \qquad  \alpha\cdot \nabla = i |\nabla| \big( \mathcal{M}_+ - \mathcal{M}_-\big)$$
and $\mathcal{M}^2_\pm = \mathcal{M}_\pm$, $\mathcal{M}_\pm \mathcal{M}_\mp = 0$. Therefore we can rewrite the above as
         \begin{align*}
         \p_t u_\pm  \mp  i|\nabla|  u_\pm &= \mathcal{M}_\pm N(u, v)\\
         \p_t  v_\pm \mp  i|\nabla |  v_\pm &= \mathcal{M}_\mp N(v, u)
        \end{align*}
where $ u_\pm = \mathcal{M}_\pm u$ and $v_\pm = \mathcal{M}_\mp v$. With this formulation we see that, for short times at least,  $u_+$ and $v_+$ should have Fourier support concentrated on the forwards light cone $\{\tau - |\xi|=0\}$, while $u_-$ and $v_-$ should have Fourier support concentrated on the backwards light cone $\{\tau + |\xi| = 0\}$. Thus the natural spaces to iterate in are the spaces $X^{s, b}_{\pm}$ defined by using the norm
            $$\| \psi \|_{X^{s, b}_\pm}  = \big\| \al \tau \mp |\xi| \ar^b \al \xi \ar^s \widehat{\psi}(\tau, \xi) \big\|_{L^2_{\tau, \xi} }.$$
We also let $H^{s, b}$ be the closely related Wave-Sobolev space defined by
            $$ \| \psi \|_{H^{s, b}} = \big\| \al |\tau| - |\xi| \ar^b \al \xi \ar^s \widehat{\psi}(\tau, \xi) \big\|_{L^2_\xi}.$$
We will iterate in the spaces $u_+, v_+ \in X^{s, b}_+$ and $u_-, v_- \in X^{s, b}_-$ for some $\frac{1}{2}<b<1$ to be chosen later.
It is well known that the proof of Theorem \ref{local} reduces to proving the estimates
        \begin{equation}\label{nonlinear est1} \| \pr_\pm N(u, v) \|_{X^{s, b-1+\epsilon}_\pm} \lesa
                \big( \| u_+ \|_{X^{s, b}_+ } + \| u_- \|_{X^{s, b}_- } + \| v_+ \|_{X^{s, b}_+ } + \| v_- \|_{X^{s, b}_- }\big)^2 \end{equation}
and
        \begin{equation}\label{nonlinear est2} \| \pr_\mp N(v, u) \|_{X^{s, b-1+\epsilon}_\pm} \lesa
                    \big( \| u_+ \|_{X^{s, b}_+ } + \| u_- \|_{X^{s, b}_- } + \| v_+ \|_{X^{s, b}_+ } + \| v_- \|_{X^{s, b}_- }\big)^2 \end{equation}
where $\epsilon>0$ is some small constant depending on $s$ and $b>\frac{1}{2}$, see for instance \cite{Selberg2002} or Section 3 in \cite{Bejenaru2006}. Since $\pr_\pm$ is a bounded operator on $H^s$, and $ \big| |\tau| - |\xi| \big| \les \big| \tau \pm |\xi|\big|$, we see that provided $b+\epsilon<1$, the estimates (\ref{nonlinear est1}) and (\ref{nonlinear est2}) follow from
                    $$ \|  N(u, v) \|_{H^{s, b-1+ \epsilon}} \lesa
                \big( \| u_+ \|_{X^{s, b}_+ } + \| u_- \|_{X^{s, b}_- } + \| v_+ \|_{X^{s, b}_+ } + \| v_- \|_{X^{s, b}_- }\big)^2 $$
and
                $$ \|  N(v, u) \|_{H^{s, b-1+\epsilon}} \lesa
                \big( \| u_+ \|_{X^{s, b}_+ } + \| u_- \|_{X^{s, b}_- } + \| v_+ \|_{X^{s, b}_+ } + \| v_- \|_{X^{s, b}_- }\big)^2 .$$
Now recalling that $u= \pr_+ u_+ + \pr_- u_-$, $v = \pr_- v_+ + \pr_+ v_-$, and
       $$N( a, b) = \begin{pmatrix}  \frac{1}{2} \big( a \cdot b - b \cdot a  \big)   \\ \beta a \cdot a \end{pmatrix},  $$
we can reduce this further to just proving the estimates
            $$ \| \pr_{\pm_1} \Psi \cdot \pr_{\pm_2} \Phi \|_{H^{s, b-1+\epsilon}} \lesa \| \Psi \|_{X^{s, b}_{\pm_1}} \| \Phi \|_{X^{s, b}_{\mp_2}},$$
            $$ \| \beta \pr_{\pm_1} \Psi \cdot \pr_{\pm_2} \Phi \|_{H^{s, b-1+\epsilon}} \lesa \| \Psi \|_{X^{s, b}_{\pm_1}} \| \Phi \|_{X^{s, b}_{\pm_2}},$$
and
            $$ \| \beta \pr_{\pm_1} \Psi \cdot \pr_{\pm_2} \Phi \|_{H^{s, b-1+\epsilon}} \lesa \| \Psi \|_{X^{s, b}_{\mp_1}} \| \Phi \|_{X^{s, b}_{\mp_2}},$$
where $\pm_1$ and $\pm_2$ are independent choices of $+$ and $-$, and $\Psi$ and $\Phi$ are functions taking values in $\mathfrak{g}\times \mathfrak{g}$. Observe that $$ \| \psi(- t, x) \|_{X^{s, b}_\pm } = \| \psi(t, x) \|_{X^{s, b}_\mp}, \qquad \| \psi(t, -x) \|_{X^{s, b}_\pm} = \| \psi(t, x) \|_{X^{s, b}_\pm}.$$
Similarly
        $$ \| \psi(- t, x) \|_{H^{s, b} } = \| \psi(t, x) \|_{H^{s, b}}, \qquad \| \psi(t, -x) \|_{H^{s, b}} = \| \psi(t, x) \|_{H^{s, b}}$$
and $\pr_\pm\big( f( - \cdot) \big)(x) = \pr_\mp f(-x)$. Furthermore a computation shows that $\beta \pr_\pm = \pr_\mp \beta$. Therefore, combining these observations, it suffices to prove
 \begin{equation}\label{mod mono est}   \| \pr_+ \Psi \cdot \pr_\pm \Phi \|_{H^{s, b-1+\epsilon}} \lesa \| \Psi \|_{X^{s, b}_+} \| \Phi \|_{X^{s, b}_\mp}
             \end{equation}

It is well known that nonlinear wave equations are only well behaved at low regularities if the nonlinear terms satisfy a null condition. The thesis of Czubak showed that the Monopole equation in the Coulomb gauge has null structure. Here we will show that the nonlinear term $\pr_+ \Psi \cdot \pr_\pm \Phi$ also has null structure in the sense that the worst interaction for parallel waves vanishes.  An easy computation shows that $ m_{\pm}(\xi)^T = m_{\pm}(\xi)$ and so
                $$ \widehat{\pr_+ \Psi \cdot \pr_\pm \Phi}(\xi) = \int_{\RR^2} m_\pm(\eta) m_+(\xi - \eta) \widehat{\Psi}(\xi-\eta) \cdot \widehat{\Phi}(\eta) d\eta.$$
Thus the symbol of $\pr_+ \Psi \cdot \pr_{\pm} \Phi$ is given by $m_\pm(\eta)m_+(\xi)$. The null structure is then contained in the following lemma.

\begin{lemma}\label{null struc}
We have the estimate
            $$ |m_+(\eta) m_\pm(\xi) | \lesa \theta(\xi, -\eta)$$
where $\theta(\xi, \eta)$ denotes the (positive) angle between $\xi$ and $\eta$.
\begin{proof}
The $(+, +)$ case follows from the computation
        \begin{align*}
           4 m_+(\eta) m_+(\xi) &= \left( I + \frac{1}{|\eta|} \alpha \cdot \eta \right) \left( I + \frac{1}{|\xi|} \alpha \cdot \xi \right) \\
                                  &= I + \frac{1}{|\eta| |\xi|} \begin{pmatrix}
                                                                         \eta_1 & \eta_2 \\
                                                                         \eta_2 & - \eta_1
                                                                \end{pmatrix}
                                                                                        \begin{pmatrix} \xi_1 & \xi_2 \\
                                                                                                          \xi_2 & - \xi_1 \end{pmatrix}
                                                                + \Big( \frac{\eta}{|\eta|} + \frac{\xi}{|\xi|} \Big) \cdot \alpha \\
                                  &=\Big( 1+ \frac{\xi \cdot \eta}{|\xi| |\eta|}\Big) I + \Big( \frac{\xi_2 \eta_1}{|\xi||\eta|} - \frac{\xi_1 \eta_2}{|\xi| |\eta|} \Big) \begin{pmatrix} 0 & 1 \\
                                                                              -1 & 0 \end{pmatrix}
                                                                              + \Big( \frac{\xi}{|\xi|} + \frac{\eta}{|\eta|} \Big) \cdot \alpha
        \end{align*}
together with the easy estimates $\Big( 1+ \frac{\xi \cdot \eta}{|\xi| |\eta|}\Big) \lesa \theta(\xi, -\eta)$, $\Big( \frac{\xi_2 \eta_1}{|\xi||\eta|} - \frac{\xi_1 \eta_2}{|\xi| |\eta|} \Big) \lesa \theta(\xi, -\eta)$, and $\Big( \frac{\xi}{|\xi|} + \frac{\eta}{|\eta|} \Big)\lesa \theta(\xi, -\eta)$. If we now note that $m_-(\eta) = m_+(-\eta)$ we obtain the $(+, -)$ case by replacing $\eta$ with $-\eta$ in the previous computation.

\end{proof}
\end{lemma}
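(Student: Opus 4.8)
The plan is to handle the $(+,+)$ case by a direct computation and then to deduce the $(+,-)$ case from the symmetry of the projections. The conceptual starting point is that, since $\alpha_1,\alpha_2$ are symmetric and satisfy the Clifford relations $\alpha_i\alpha_j+\alpha_j\alpha_i=2\delta_{ij}I$, the matrix $\frac{1}{|\xi|}\alpha\cdot\xi$ is a symmetric involution, so that $m_\pm(\xi)$ are genuine orthogonal projections onto the one-dimensional $\pm1$ eigenspaces of $\frac{1}{|\xi|}\alpha\cdot\xi$. These eigenlines rotate at half the angular speed of $\xi$, which already explains heuristically why $m_+(\eta)m_+(\xi)$ should degenerate precisely when $\xi$ and $-\eta$ become parallel, i.e. when $\theta(\xi,-\eta)\to0$; the computation must reproduce this.

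First I would expand $4\,m_+(\eta)m_+(\xi)=\big(I+\tfrac{1}{|\eta|}\alpha\cdot\eta\big)\big(I+\tfrac{1}{|\xi|}\alpha\cdot\xi\big)$. Writing $\hat\xi=\xi/|\xi|$, the only nontrivial term is the product $(\alpha\cdot\hat\eta)(\alpha\cdot\hat\xi)$ of the two symmetric matrices; multiplying them out, the symmetric part collapses to $(\hat\eta\cdot\hat\xi)\,I$ while the remainder is an antisymmetric multiple of $\beta$, namely $(\hat\eta_1\hat\xi_2-\hat\eta_2\hat\xi_1)\beta$. Collecting everything I expect an expression of the shape $\big(1+\hat\eta\cdot\hat\xi\big)I+\big(\hat\eta_1\hat\xi_2-\hat\eta_2\hat\xi_1\big)\beta+\big(\hat\eta+\hat\xi\big)\cdot\alpha$. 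Since $I,\beta,\alpha_1,\alpha_2$ all have bounded operator norm, it then suffices to bound the two scalars $1+\hat\eta\cdot\hat\xi$ and $\hat\eta_1\hat\xi_2-\hat\eta_2\hat\xi_1$, together with the vector norm $|\hat\eta+\hat\xi|$, by $\theta(\xi,-\eta)$.

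The key point in these estimates is that the angle between $\xi$ and $\eta$ equals $\pi-\theta(\xi,-\eta)$, so $\hat\eta\cdot\hat\xi=-\cos\theta(\xi,-\eta)$. Hence $1+\hat\eta\cdot\hat\xi=1-\cos\theta(\xi,-\eta)=2\sin^2\!\big(\tfrac12\theta(\xi,-\eta)\big)\lesa\theta(\xi,-\eta)^2\lesa\theta(\xi,-\eta)$ (using $\theta\les\pi$); the signed area $\hat\eta_1\hat\xi_2-\hat\eta_2\hat\xi_1$ has modulus $\sin\theta(\xi,-\eta)\les\theta(\xi,-\eta)$; and $|\hat\eta+\hat\xi|=\sqrt{2\big(1+\hat\eta\cdot\hat\xi\big)}=2\big|\sin\big(\tfrac12\theta(\xi,-\eta)\big)\big|\les\theta(\xi,-\eta)$. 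Summing the three contributions gives $|m_+(\eta)m_+(\xi)|\lesa\theta(\xi,-\eta)$. For the remaining sign I would use $m_-(\xi)=m_+(-\xi)$, so that $m_+(\eta)m_-(\xi)=m_+(\eta)m_+(-\xi)$ and the inequality just proved applies verbatim with $\xi$ replaced by $-\xi$. I expect the only real subtlety, and hence the main obstacle, to be this geometric bookkeeping: correctly locating the degeneracy at the antipodal configuration, verifying that all three coefficients carry the angle factor simultaneously, and tracking which of $\theta(\xi,\pm\eta)$ survives after the sign flip. The matrix algebra itself is routine once the Clifford relations are invoked.
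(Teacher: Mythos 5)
Your proof is correct and follows essentially the same route as the paper: expand $4\,m_+(\eta)m_+(\xi)$ into its $I$, $\beta$, and $\alpha$ components, bound each scalar coefficient by the angle, and then deduce the mixed-sign case from the symmetry $m_-(\zeta)=m_+(-\zeta)$ (you flip $\xi$ where the paper flips $\eta$, which is the same trick). Your sign bookkeeping is also the right one: the mixed case genuinely comes out as $\theta(\xi,\eta)$ rather than the $\theta(\xi,-\eta)$ literally displayed in the lemma statement, and this is exactly what the paper's own proof produces and what its later application to the symbol $m_\pm(\eta)m_+(\xi-\eta)$ requires.
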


Define $Q_{\pm}(\psi, \phi)$ by
            $$\widehat{ Q_\pm(\psi, \phi)}(\xi) = \int_{\RR^2} \theta(\xi-\eta, \pm \eta) \widehat{\psi}(\xi - \eta) \widehat{\phi}(\eta) d\eta.$$
Then by Lemma \ref{null struc} we have reduced the proof of Theorem \ref{local} to proving
            $$ \| Q_{\pm}(\psi, \phi) \|_{H^{s, b-1+\epsilon}} \lesa \| \psi \|_{X^{s, b}_+} \| \phi \|_{X^{s, b}_\pm}. $$
This estimate is essentially well known and follows from the work of Klainerman-Selberg \cite{Klainerman2002}, Foschi-Klainerman \cite{Foschi2000}, using ideas from \cite{D'Ancona2007}. However as we could not find this  inequality explicitly stated in the literature, we will include a proof  in the next section.  We note that the standard null form estimates for the wave equation in $\RR^{1+2}$ were proven by Zhou \cite{Zhou1997}. The origin of these types of estimates is the seminal paper of Klainerman-Machedon \cite{Klainerman1993}.

\section{Null-Form estimates}

Here we prove the following estimate.
\begin{theorem}\label{null form est}
Let $s>\frac{1}{4}$. Then there exists $b>\frac{3}{4}$ and $\epsilon>0$ with $b+\epsilon<1$ such that
           \begin{equation}\label{null form est main} \| Q_{\pm}(\psi, \phi) \|_{H^{s, b-1+\epsilon}} \lesa \| \psi \|_{X^{s, b}_+} \| \phi \|_{X^{s, b}_\pm}. \end{equation}
\end{theorem}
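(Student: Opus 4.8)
The plan is to pass to a dual, dyadically localised, trilinear estimate, to trade the angular factor $\theta$ for a gain in the modulation variables, and then to invoke the sharp bilinear $L^2$ estimates for the cone in $\RR^{1+2}$ of Foschi--Klainerman \cite{Foschi2000} and Klainerman--Selberg \cite{Klainerman2002}. Writing $\xi_1 = \xi - \eta$ and $\xi_2 = \eta$ for the input frequencies, $\tau_1, \tau_2$ for the input time frequencies, and setting $N_j = \al \xi_j \ar$, $N_0 = \al \xi \ar$, $L_1 = \al \tau_1 - |\xi_1| \ar$, $L_2 = \al \tau_2 \mp |\xi_2| \ar$, $L_0 = \al |\tau| - |\xi| \ar$, I would first use duality to rewrite (\ref{null form est main}) as the trilinear bound
        $$ \int M\, f\, g\, \overline{h}\; d\mu \lesa \|f\|_{L^2} \|g\|_{L^2}\|h\|_{L^2}, \qquad M = \frac{N_0^s}{N_1^s N_2^s}\,\frac{L_0^{b-1+\epsilon}}{L_1^b L_2^b}\,\theta(\xi_1, \pm \xi_2), $$
taken over the convolution measure $d\mu$ supported on $\{\xi = \xi_1 + \xi_2,\ \tau = \tau_1 + \tau_2\}$, where the nonnegative functions $f, g, h$ arise by transferring the weights onto $\widehat{\psi}, \widehat{\phi}$ and the dual variable.

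The central ingredient is the geometric estimate
        $$ \theta(\xi_1, \pm \xi_2) \lesa \Big( \frac{\max(L_0, L_1, L_2)}{\min(N_1, N_2)}\Big)^{\frac{1}{2}}, $$
which I would derive from the dispersive identity $|\tau| - |\xi| = (\tau_1 - |\xi_1|) + (\tau_2 \mp |\xi_2|) + \big( |\xi_1| \pm |\xi_2| - |\xi|\big)$, in which the left side is $\lesa L_0$ and the first two terms on the right are $\lesa L_1, L_2$, together with the elementary lower bound $\big|\, |\xi_1| \pm |\xi_2| - |\xi| \,\big| \gtrsim \min(|\xi_1|, |\xi_2|)\, \theta(\xi_1, \pm \xi_2)^2$; this is the quantitative form of the cancellation already isolated in Lemma \ref{null struc}. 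Substituting this bound into $M$, I would decompose dyadically in the six variables $N_0, N_1, N_2, L_0, L_1, L_2$ and split into cases according to which modulation is largest and according to the sign $\pm$. When an input modulation dominates, its gain $L_j^{1/2}$ is absorbed by the favourable weight $L_j^{-b}$ with $b > \tfrac12$, and the resulting sums converge comfortably; the delicate configurations are those in which the output modulation dominates, where the identity pins $\theta \sim (L_0 / \min(N_1, N_2))^{1/2}$ and forces $L_0 \lesa \min(N_1, N_2)$.

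On each dyadic block I would estimate the integral by $\sup M$ times the bilinear constant $\mathcal{B}(N, L)$ controlling $\| \psi_{N_1, L_1}\, \phi_{N_2, L_2}\|_{L^2}$ with the product localised to $(N_0, L_0)$. In the benign regimes this constant may be replaced by two applications of the two-dimensional $L^4_{t,x}$ Strichartz estimate (which costs a quarter derivative), while in the degenerate regimes one needs the full strength of the sharp cone estimates of \cite{Foschi2000}, \cite{Klainerman2002}. It then remains to sum $M\,\mathcal{B}$ over all dyadic scales against three $\ell^2$ sequences, which I would carry out by Schur's test combined with a dyadic Hardy inequality to handle the tail sums produced by the high-high interactions; the exponents are arranged so that every sum converges for $s > \tfrac14$ and for a suitable $b > \tfrac34$ with $b + \epsilon < 1$.

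I expect the main obstacle to be the resonant interaction of nearly parallel high-frequency waves, $N_1 \sim N_2$ (most critically, two waves on opposite cones with almost antipodal spatial frequencies producing a comparatively low output frequency $N_0$), which is exactly what governs the $Q_-$ estimate. There the interaction phase is stationary in the angular variable and the bilinear constant degenerates, so that without any cancellation the frequency sum would diverge; it is precisely the null-form gain supplied by $\theta$ that restores convergence and brings the threshold down to the scaling-type value $s = \tfrac14$, which is sharp in view of the counterexample of Zhou \cite{Zhou1997} recorded in the remark. Controlling this resonant region---balancing the angular gain against the degenerate bilinear constant and summing at the endpoint regularity---is the heart of the matter. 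Once (\ref{null form est main}) is established, Theorem \ref{local}, and hence Theorem \ref{main thm}, follows from the standard contraction-mapping and energy arguments in the spaces $X^{s,b}_\pm$, together with the accompanying continuity of the solution map and persistence of higher regularity.
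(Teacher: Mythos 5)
Your plan is the standard ``dualise $+$ dyadically decompose $+$ trade angle for modulations $+$ sum'' route, and it is genuinely different in execution from the paper's proof: the paper never dualises or decomposes dyadically, but instead uses the sharp relations (\ref{angle est}) to convert $\theta$ into a power $r_\pm^{\,b-\frac12+\epsilon}$ of the hyperbolic weight, splits the remaining power over the three modulation weights via (\ref{null form est eqn2}), and then applies a single global bilinear estimate (Theorem \ref{homogeneous null}, i.e.\ Foschi--Klainerman plus transference, applied to $S^\alpha_\pm$). Your route could in principle be made to work, but as written it has genuine gaps, not just missing routine details.

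First, one of your stated ingredients is false: there is no $L^4_{t,x}$ Strichartz estimate for the wave equation on $\RR^{1+2}$ costing a quarter of a derivative. The pair $(4,4)$ is not wave-admissible in two space dimensions (a Knapp example defeats it); the correct threshold for the $X^{s,b}$ substitute $\| uv \|_{L^2_{t,x}} \lesa \|u\|_{X^{s_1,b}_{\pm_1}} \|v\|_{X^{s_2,b}_{\pm_2}}$ in $\RR^{1+2}$ is $s_1+s_2 > \frac34$, i.e.\ three-eighths of a derivative per factor. This failure is exactly why Theorem \ref{homogeneous null} carries the hypothesis $\alpha \g \frac14$, and it is the mechanism that produces the restriction $s>\frac14$ in the paper (which, contrary to your remark, is not the scaling exponent: scaling is $s=0$ here). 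So your treatment of the ``benign regimes'' rests on an estimate that does not exist, and those regimes too must be fed through the Foschi--Klainerman machinery. Second, in the one configuration you yourself identify as the heart of the matter --- high-high interactions of opposite cones producing low output frequency, i.e.\ the $Q_-$ case --- your angle bound is lossy. The sharp relation, proved in the paper as (\ref{angle est}), is $\theta^2(\xi-\eta,-\eta) \approx \frac{|\xi|}{|\xi-\eta||\eta|}\, r_-$, and the factor $|\xi|$ (the \emph{output} frequency) in the numerator is essential there; your bound $\theta \lesa \big(\max(L_0,L_1,L_2)/\min(N_1,N_2)\big)^{\frac12}$ in effect replaces $|\xi|$ by $\max(N_1,N_2)$ and thus gives away a factor $(N_0/N)^{\frac12}$ precisely on the resonant blocks. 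Whether your dyadic sums still close near $s=\frac14$ after this loss, against the degenerate bilinear constants of that regime, is exactly what a proof must verify --- and the proposal never verifies it: the case analysis and the summation are asserted (``the exponents are arranged so that every sum converges''), not performed, and the minor sign issue in passing from $\tau$ to $|\tau|$ in your dispersive identity (handled in the paper by a short footnote argument) is also glossed over. In short, you have named the right ingredients and the right enemy, but the step that constitutes the actual proof is missing, and one of the tools you plan to use is not available in two space dimensions.
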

Note that this completes the proof of Theorem \ref{local}. To prove Theorem \ref{null form est} we need to introduce some notation. Let
        $$r_+ = |\xi - \eta| + |\eta| - |\xi|, \qquad \qquad r_- = |\xi| - \big| |\xi - \eta| - |\eta| \big|,$$
and define the bilinear operator $S^\alpha_\pm(\psi, \phi)$ by
        $$ \widehat{ S^\alpha_\pm(\psi, \phi) }(\xi) = \int_{\RR^2} r_\pm^\alpha \widehat{\psi}(\xi - \eta) \widehat{\phi}(\eta) d\eta.$$
Moreover define the Fourier multipliers $D^s$,  $\Lambda^s$, and $\Omega_\pm^b$ by
        $$ \widehat{D^s\psi}(\xi) = |\xi|^s \widehat{\psi}(\xi), \qquad \qquad \widehat{\Lambda^s \psi }(\xi) = \al \xi \ar^s \widehat{\psi}(\xi), \qquad \qquad \widehat{\Omega_\pm^b\psi}(\tau, \xi) = \al \tau \mp |\xi| \ar^b \widehat{\psi}(\xi).$$
Then we have the following estimate, which follows from \cite{Foschi2000} and is the analogue of Theorem 3.5 in \cite{Klainerman2002} for the $X^{s, b}_\pm$ spaces.

\begin{theorem}\label{homogeneous null}
Let $s, \alpha, s_1, s_2 \in \RR$ and $b'>\frac{1}{2}$. Then the estimate
        \begin{equation}\label{homogeneous null eqn1} \| D^s S^\alpha_\pm(\psi, \phi) \|_{L^2_{t, x}} \lesa \| D^{s_1} \psi \|_{X^{0, b'}_+} \| D^{s_2} \phi \|_{X^{0, b'}_\pm} \end{equation}
holds provided
        \begin{align*}
            s+\alpha &= s_1 + s_2 - \frac{1}{2} \\
            \alpha &\g \frac{1}{4} \\
            s_i &\les \alpha + \frac{1}{2}\\
            s_1+s_2 &\g \frac{1}{2}\\
            s&>\frac{-1}{2}
        \end{align*}
and $(s_i, \alpha) \neq (\frac{3}{4}, \frac{1}{4})$, $(s_1+s_2, \alpha) \neq (\frac{1}{2}, \frac{1}{4})$.
\begin{proof}
The hard work is contained in the result of Foschi-Klainerman \cite{Foschi2000} where the following estimate is proven
        $$ \big\| D^s D_-^\alpha (e^{it |\nabla| } f \, e^{\pm i t |\nabla| } g) \big\|_{L^2_{t, x}(\RR^{1+2})} \lesa \| D^{s_1} f\|_{L^2(\RR^2)} \| D^{s_2} g \|_{L^2(\RR^2)}$$
under the above conditions on the exponents $s, s_1, s_2, \alpha$ where $\widetilde{D_-^\alpha \psi }(\tau, \xi)= \big| |\tau| - |\xi| \big|^\alpha \widetilde{\psi}$. It is easy to see that
            $$ D_-^\alpha (e^{it |\nabla|} f \, e^{\pm i t |\nabla|} g) = S^\alpha_\pm (e^{it |\nabla|} f , e^{\pm i t |\nabla|} g).$$
Now since the operator $S^\alpha_\pm$ only acts on the $\xi$ variable, the expression on the lefthand side of (\ref{homogeneous null eqn1}) is invariant under multiplication by the modulations $e^{it \tau_0}$. Therefore  an application of the Transference principle\footnote{See for instance
Lemma 2.9 in \cite{Tao2006b}.} completes the proof.
\end{proof}
\end{theorem}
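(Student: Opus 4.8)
The plan is to deduce (\ref{homogeneous null eqn1}) from the corresponding bilinear estimate for products of free waves, which is the restriction-type estimate of Foschi--Klainerman \cite{Foschi2000}, and then to pass to the $X^{0,b'}_\pm$ spaces by a transference argument. The single piece of hard analysis is the free-wave bound
$$ \big\| D^s D_-^\alpha \big( e^{it|\nabla|} f \, e^{\pm it|\nabla|} g \big) \big\|_{L^2_{t,x}(\RR^{1+2})} \lesa \| D^{s_1} f \|_{L^2} \, \| D^{s_2} g \|_{L^2}, $$
valid under exactly the stated constraints on $(s, s_1, s_2, \alpha)$ and the two excluded endpoints, where $\widetilde{D_-^\alpha\psi}(\tau,\xi) = \big||\tau|-|\xi|\big|^\alpha \widetilde\psi(\tau,\xi)$. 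I would take this as a black box, since reproving it amounts to the full dyadic decomposition and orthogonality analysis on the light cone carried out in \cite{Foschi2000}; my task is only to connect it to $S^\alpha_\pm$ and to remove the requirement that the inputs be exact free waves.

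The bridge is the identity $D_-^\alpha\big(e^{it|\nabla|}f\, e^{\pm it|\nabla|}g\big) = S^\alpha_\pm\big(e^{it|\nabla|}f, e^{\pm it|\nabla|}g\big)$, which I would verify on the Fourier side. Since $\widetilde{e^{it|\nabla|}f}(\tau,\xi) = \widehat f(\xi)\,\delta(\tau-|\xi|)$ and $\widetilde{e^{\pm it|\nabla|}g}(\tau,\xi) = \widehat g(\xi)\,\delta(\tau\mp|\xi|)$, convolving in $(\tau,\xi)$ localises the product to $\tau = |\xi-\eta|\pm|\eta|$. In the $+$ case $\tau = |\xi-\eta|+|\eta|\g 0$, so $\big||\tau|-|\xi|\big| = |\xi-\eta|+|\eta|-|\xi| = r_+$ by the triangle inequality; in the $-$ case $\tau = |\xi-\eta|-|\eta|$ gives $|\tau| = \big||\xi-\eta|-|\eta|\big| \les |\xi|$ and hence $\big||\tau|-|\xi|\big| = |\xi| - \big||\xi-\eta|-|\eta|\big| = r_-$. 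Thus on products of free waves the modulation weight $\big||\tau|-|\xi|\big|^\alpha$ coincides pointwise with the spatial symbol $r_\pm^\alpha$ defining $S^\alpha_\pm$, so the two operators agree and the displayed free-wave bound is exactly (\ref{homogeneous null eqn1}) restricted to free data.

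To remove the free-data restriction I would run the transference principle (e.g.\ Lemma 2.9 of \cite{Tao2006b}). Writing $F = \al\tau-|\xi|\ar^{b'}\widetilde\psi$, I would decompose $\psi = \int_{\RR} \al\sigma\ar^{-b'} e^{it\sigma}\, \psi_\sigma\, d\sigma$ as a superposition over the modulation parameter $\sigma$ of time-modulated free waves $\psi_\sigma = e^{it|\nabla|}f_\sigma$ with $\widehat{f_\sigma}(\xi) = F(\sigma+|\xi|,\xi)$, noting $\int_{\RR}\|D^{s_1}f_\sigma\|_{L^2}^2\,d\sigma = \|D^{s_1}\psi\|_{X^{0,b'}_+}^2$ by Plancherel, and similarly for $\phi$. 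Because $D^s S^\alpha_\pm$ acts only on spatial frequencies it commutes with the time modulations $e^{it\sigma}$, so by Minkowski's inequality and the modulation-invariance of the $L^2_{t,x}$ norm the left side of (\ref{homogeneous null eqn1}) is bounded by $\int\int \al\sigma\ar^{-b'}\al\sigma'\ar^{-b'}\, \| D^s S^\alpha_\pm(\psi_\sigma,\phi_{\sigma'})\|_{L^2}\, d\sigma\, d\sigma'$. Applying the free-wave estimate to each factor $\| D^s S^\alpha_\pm(\psi_\sigma,\phi_{\sigma'})\|_{L^2} \lesa \|D^{s_1}f_\sigma\|_{L^2}\|D^{s_2}g_{\sigma'}\|_{L^2}$ and then Cauchy--Schwarz in $\sigma$ and $\sigma'$ yields the product of $\big(\int\al\sigma\ar^{-2b'}d\sigma\big)^{1/2}\|D^{s_1}\psi\|_{X^{0,b'}_+}$ with the analogous $\phi$ factor. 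The one point that genuinely requires care is this final summation: it closes precisely because $b'>\tfrac12$ makes $\al\sigma\ar^{-b'}\in L^2_\sigma$, and because the homogeneous weights $D^s, D^{s_1}, D^{s_2}$, being spatial multipliers, survive the transference untouched. This is the main, and essentially the only, obstacle once the Foschi--Klainerman estimate is granted.
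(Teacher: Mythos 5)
Your proposal is correct and follows the same route as the paper's proof: take the Foschi--Klainerman free-wave estimate as a black box, verify the identity $D_-^\alpha\big(e^{it|\nabla|}f\,e^{\pm it|\nabla|}g\big) = S^\alpha_\pm\big(e^{it|\nabla|}f, e^{\pm it|\nabla|}g\big)$ on the Fourier side, and then transfer to $X^{0,b'}_\pm$ using modulation invariance of the spatial multiplier $D^s S^\alpha_\pm$. The only difference is presentational: where the paper cites the transference principle (Lemma 2.9 of \cite{Tao2006b}), you unpack it explicitly via the superposition over modulations and Cauchy--Schwarz in $\sigma$, which is exactly the content of that lemma and correctly identifies $b'>\frac{1}{2}$ as the point where the argument closes.
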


Theorem \ref{null form est} will now follow by using an argument from \cite{D'Ancona2007}.

\begin{proof}[Proof of Theorem \ref{null form est}]
We begin by noting that since the left and righthand sides of (\ref{null form est main})  only depend on the size of the Fourier transform of $\psi$ and $\phi$, we can use the triangle inequality to reduce to the case $\frac{1}{4} < s < \frac{1}{2}$. Choose $\epsilon>0$ and $b>\frac{3}{4}$ so that $s = b - \frac{1}{2} +\epsilon$. Note that $b + \epsilon < 1$.

We now deal with the low frequency case. Assume the product $\psi \phi$ has Fourier support contained in the set $\{ |\xi| <1\}$. Let $\rho \in C^\infty_0(\RR^2)$ with $\widehat{\rho}=1$ for $|\xi|<1$. Then
        \begin{equation}\label{conv}
          \psi \phi = \rho * (\psi \phi)
        \end{equation}
where the convolution is with respect to the $x$ variable. By discarding the smoothing multiplier $\al |\tau| - |\xi| \ar^{b-1+\epsilon}$, the null form $Q_\pm$, and using the assumption $\al \xi \ar \lesa 1$ together with (\ref{conv}), we have
    \begin{align*}
      \| Q_{\pm} (\psi, \phi) \|_{H^{s, b-1+\epsilon}} &\lesa \| \rho * (\psi \phi ) \|_{L^2_{t, x}} \\
                                                    &\lesa \| \psi \phi \|_{L^2_t L^1_x} \\
                                                    &\lesa \| \psi \|_{L^\infty_t L^2_x} \| \phi \|_{L^2_{t, x}} \\
                                                    &\lesa \| \psi \|_{X^{s, b}_+} \| \phi \|_{X^{s, b}_\pm}.
    \end{align*}
Therefore the low frequency case follows.

Since we may now assume $|\xi|>1$, it suffices to prove
            \begin{equation}\label{null form est eqn3} \| D^s Q_{\pm} ( \psi, \phi) \|_{H^{0, b-1+\epsilon} } \lesa \| \psi \|_{X^{s, b}_+} \| \phi \|_{X^{s, b}_\pm}. \end{equation}
 To this end we will need the following estimate on the symbol of $Q_\pm$,
        \begin{equation}\label{angle est} \theta^2(\xi - \eta, \eta) \approx\frac{|\xi - \eta| + |\eta|}{|\xi-\eta||\eta|} r_+, \qquad \qquad \theta^2(\xi-\eta, - \eta) \approx \frac{|\xi|}{|\xi- \eta||\eta|} r_-.\end{equation}
Note that these estimates  gives us a smoothing derivative $D^{-1}$ at the cost of a hyperbolic derivative $r_\pm$. To prove (\ref{angle est}) note that
        \begin{align*}
            ( |\eta| + |\xi - \eta| - |\xi|)(|\eta| + |\xi - \eta| + |\xi|) &= 2 \big(|\eta| |\xi - \eta| - \eta \cdot (\xi - \eta) \big) \\
                                                            &= 2 |\eta| |\xi - \eta| \big( 1 - \cos(\theta(\xi- \eta, \eta) )\big) \end{align*}
which proves the first estimate. For the second we have
 \begin{align*}    \big( |\xi| + \big| |\xi - \eta| - |\eta| \big| \big) \big( |\xi| - \big| |\xi - \eta| - |\eta| \big| \big)
                        &=2 \big( |\xi - \eta| |\eta| + \eta \cdot (\xi - \eta) \big) \\
                        &=2 |\xi - \eta| |\eta| \big( 1 - \cos( \theta(\xi - \eta, -\eta))\big) \end{align*}
and since $|\xi| \g \big| |\xi - \eta| - \eta \big|$ we have $|\xi | \approx |\xi| + \big| |\xi - \eta| - \eta \big| $ which gives the second estimate. We will also need the following estimate\footnote{The + case follows by writing
        $$ r_+ = (\tau - |\xi|) - ( \tau - \lambda - |\xi - \eta|) - (\lambda - |\eta|).$$
If $\tau>0$ the triangle inequality gives inequality while if $\tau<0$ then the term $(\tau - |\xi|)$ is less than zero and so can be discarded. The $-$ case follows from a similar computation after we note that
        $$ r_- \les \begin{cases} |\xi| + |\xi - \eta| - |\eta| \\
                                    |\xi| - |\xi - \eta| + |\eta|. \end{cases} $$}

            $$ r_\pm \les \big| |\tau| - |\xi|\big| + \big| |\tau - \lambda  - |\xi - \eta| \big| + \big| \lambda \mp |\eta| \big|$$
which leads to
                \begin{equation}\label{null form est eqn2}
                        r_\pm \lesa \al |\tau| - |\xi| \ar \al \tau - \lambda - |\xi - \eta| \ar \al \lambda \mp |\eta| \ar.
                \end{equation}

We are now ready to prove the $+$ case. Combining the estimates for $\theta$ and $r_+$ and assuming $|\eta| >|\xi - \eta|$ (as we may be symmetry) we have
    $$ \theta( \xi - \eta, \eta) \lesa \frac{r_+^{\frac{1}{2}}}{|\xi - \eta|^{\frac{1}{2}}} \lesa \frac{r_+^{b - \frac{1}{2} + \epsilon}}{|\xi - \eta|^{\frac{1}{2}}} \al |\tau| - |\xi| \ar^{1-b-\epsilon} \al \tau - \lambda - |\xi - \eta| \ar^{1-b-\epsilon} \al \lambda - |\eta| \ar^{1-b-\epsilon}.$$
and so
    $$ \| D^{s} Q_+(\psi, \phi) \|_{H^{0, b-1+\epsilon}} \lesa \Big\|  D^s S^{b - \frac{1}{2} + \epsilon}_+\big( \Omega_+^{1 - b- \epsilon}\psi, D^{-\frac{1}{2}} \Omega^{1-b-\epsilon}_+ \phi\big) \Big\|_{L^2_{t,x} }.$$
Therefore the $+$ case follows from Theorem \ref{homogeneous null} by taking\footnote{This is where we require the assumption $s>\frac{1}{4}$. As to apply Theorem \ref{homogeneous null} we need $\alpha>\frac{1}{4}$ and $s  + \alpha = s_1 + s_2 - \frac{1}{2}$ which implies $s = \alpha >\frac{1}{4}$. Note that if we could take $\alpha = 0$ then we would have local well-posedness for all $s>0$. However, heuristically speaking, since we have to assume $\alpha>\frac{1}{4}$ we can only use the null form $Q_\pm$ to cancel half the hyperbolic derivative $\al |\tau| - |\xi| \ar^{-\frac{1}{2}}$. See the related discussion after Theorem 3.3 in \cite{D'Ancona2007}.} $b' = 2b - 1 + \epsilon$, $s_1 = s$, $s_2 = s + \frac{1}{2}$, and $\alpha= b-\frac{1}{2} + \epsilon$. It is easy to check that the required conditions on $\alpha$, $s_1$, $s_2$, $s$, and $b'$ are satisfied.
To obtain the $-$ case we note that (\ref{angle est}) and (\ref{null form est eqn2}) give the estimate
            $$  \theta( \xi - \eta, -\eta) \lesa \frac{|\xi|^{\frac{1}{2}} r_-^{\frac{1}{2}}}{|\xi - \eta|^{\frac{1}{2}}|\eta|^{\frac{1}{2}}}
                        \lesa \frac{|\xi|^{\frac{1}{2}} r_-^{b - \frac{1}{2} + \epsilon}}{|\xi - \eta|^{\frac{1}{2}}|\eta|^{\frac{1}{2}}} \al |\tau| - |\xi| \ar^{1-b-\epsilon} \al \tau - \lambda - |\xi - \eta| \ar^{1-b-\epsilon} \al \lambda + |\eta| \ar^{1-b-\epsilon}.$$
Thus
    $$ \| D^s Q_-(\psi, \phi) \|_{H^{0, b-1+\epsilon}} \lesa \Big\|  S^{b - \frac{1}{2} + \epsilon}_-\big( D^{-\frac{1}{2}}\Omega_+^{1 - b- \epsilon}\psi, D^{-\frac{1}{2}} \Omega^{1-b-\epsilon}_- \phi\big) \Big\|_{L^2_{t, x} }$$
and so the required estimate follows from Theorem \ref{homogeneous null} by taking $b' = 2b - 1 + \epsilon$, $s_1 = s+\frac{1}{2}$, $s_2 = s + \frac{1}{2}$, and $\alpha= b-\frac{1}{2} + \epsilon$. Again it is easy to check that the required conditions are satisfied.

\end{proof}

\bibliographystyle{amsplain}
\bibliography{C:/math/Documents/Bibdata/DatabaseCurrent}

\end{document}